\newtheorem{thm}{Theorem}[section]
\newtheorem{lem}[thm]{Lemma}
\theoremstyle{remark}
\newtheorem{rem}[thm]{Remark}
\newtheorem*{rem*}{Remark}
\theoremstyle{definition}
\newtheorem{dfn}[thm]{Definition}
\newtheorem{ex}[thm]{Example}
\numberwithin{equation}{section}
\newcommand{\om}{\Omega}
\newcommand{\Rz}{\mathbb{R}}
\newcommand{\Nt}{\mathbb{N}}
\begin{document}

\title{UPC condition with parameter for subanalytic sets}

\author{Anna Denkowska \& Maciej P. Denkowski} 
\date{May 3rd 2014}
\keywords{Subanalytic geometry, UPC property, regular separation, o-minimal structures, cell decomposition.}
\subjclass{32B20}

\begin{abstract}
In 1986 Paw\l ucki and Ple\'sniak introduced the notion of {\sl uniformly polynomially cuspidal} (UPC) sets and proved that every relatively compact and fat subanalytic subset of ${\Rz}^n$ satisfies the UPC condition. Herein we investigate the UPC property of the sections of a relatively compact open subanalytic set $E\subset{\Rz}^k\times{\Rz}^n$ and we show that two of the three parameters in the UPC condition can be chosen independently of the section, while the third one depends generally on the point defining the section.
\end{abstract}
\maketitle

\section{Introduction}

Let $E\subset {\Rz}^n$ be non-empty. In \cite{PP} Paw\l ucki and Ple\'sniak introduced the following notion:

\begin{dfn}
The set $E$ is said to be {\it uniformly polynomially cuspidal} (or UPC for short), if it satisfies the following {\it UPC condition}:\\
there exist positive constants $M,m,d>0$ with $d\in{\Nt}$, such that for each point $x\in\overline{E}$ one may choose a polynomial map $h_x\colon {\Rz}\to {\Rz}^n$ of degree $\leq d$, satisfying \begin{enumerate}
\item $h_x(0)=x,\> h_x((0,1])\subset E;$
\item $\mathrm{dist}(h_x(t),{\Rz}^n\setminus E)\geq Mt^m,\> t\in[0,1],\> x\in\overline{E},$ where the distance is computed in the euclidean norm.
\end{enumerate}
Then we write $E\in UPC(M,m,d)$. 
\end{dfn}

\begin{rem} Condition (2) clearly implies the second part of condition (1) in the definition. At the same time, the latter implies that in condition (2) one may replace the distance from ${\Rz}^n\setminus E$ by the distance from the boundary $\partial E$. \\ It is also easy to see that a UPC set $E$ is {\it fat}, i.e. $\overline{E}=\overline{\mathrm{int}\ E}$. 
\end{rem}

Since we will be dealing with subanalytic sets let us shortly recall the basic definitions. Semianalytic sets were introduced by S. \L ojasiewicz (see \cite{L}) and led in a natural way to the notion of subanalyticity (the class of subanalytic sets is the smallest boolean algebra closed under projections and including the class of semianalytic sets). For a detailed study of subanalytic geometry we refer the reader to the survey \cite{DS}. 

Let $M$ be a real analytic variety. Throughout this paper we shall deal with $M={\Rz}^m$.

\begin{dfn} (cf. \cite{L}) 
A set $E\subset M$ is called {\it semianalytic}, if for each point $a\in M$ one may choose a neighbourhood $U\ni a$ and a finite family of real analytic functions on $U$, say $f_i, g_{ij}$, $i=1,\dots, r$, $j=1,\dots, s$, such that $$E\cap U=\bigcup_{i=1}^r\{x\in U\mid f_i(x)=0,\> g_{ij}(x)>0,\> j=1,\dots, s\}.$$
\end{dfn}

\begin{dfn}
A set $E\subset M$ is called {\it subanalytic}, if each point $a\in M$ has a neighbourhood $U\ni a$ such that $E\cap U$ is the projection of a relatively compact semianalytic set, i.e. $E\cap U=\pi(A)$, where $\pi\colon M\times N\to M$ is the natural projection, $N$ is a real analytic variety and $A\subset N$ is semianalytic. \\
A mapping $f\colon \om\to N$, where $\om\subset M$ is a set and $M,N$ are real analytic varieties, is called {\it subanalytic}, if its graph is a subanalytic subset of $M\times N$ (then $\om$ also is subanalytic).
\end{dfn}

In \cite{PP} corollary 6.6 states that every relatively compact subanalytic subset $E\subset{\Rz}^n$ which is fat is UPC. As a matter of fact it is a straightforward consequence of theorem 6.4 of \cite{PP} being a refinement of the Bruhat-Cartan-Wallace curve selecting lemma.

\medskip
\noindent{\bf Acknowledgements.} The idea to write this paper came to us logn ago, while listening to the PhD lectures \cite{P} carried out by Professor Wies\l aw Ple\'sniak whom we would like to thank for this inspiration. 

\section{Auxiliary results}

In this part we shall state and prove some facts which in turn will be most helpful in the main part. Let $\pi\colon {\Rz}^m\times {\Rz}^n\ni(x,y)\to x\in{\Rz}^m$ be the natural projection. For a set $A\subset{\Rz}^m\times {\Rz}^n$ we denote its {\sl section} at $x\in\pi(A)$ by $A_x:=\{y\in{\Rz}^n\mid (x,y)\in A\}$. We begin with the following theorem from \cite{LW} (see also \cite{DS}):

\begin{thm}\label{Lojparam} (Regular separation with parameter.)\\
Let $A,B\subset {\Rz}^m\times {\Rz}^n$ two relatively compact subanalytic sets. Then there exists a positive constant $r>0$ such that for all $x\in\pi(A\cap B)$ the inequality 
$$\mathrm{dist}(y, A_x)\geq C(x)\mathrm{dist}(y,A_x\cap B_x)^r,\quad \hbox{\it for}\ y\in B_x\leqno{(\ast)}$$
holds for some $C(x)>0$.\\ Moreover, the function $C\colon \pi(A\cap B)\to (0,+\infty)$ may be required to be subanalytic.
\end{thm}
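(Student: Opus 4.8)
The plan is to reduce the inequality $(\ast)$ to a uniformity statement about a one‑parameter family of subanalytic functions of one real variable, the decisive point being that the {\L}ojasiewicz exponent of a subanalytic pair is always a \emph{rational} number. As in the classical (non‑parametric) regular separation theorem I would assume $A$ and $B$ closed; being relatively compact they are then compact, and so are all the sections $A_x$, $B_x$ and the intersections $A_x\cap B_x$. Throughout I will use two standard facts, both expressing that bounded subanalytic sets form an o‑minimal structure: (i) for a relatively compact subanalytic $S\subset{\Rz}^m\times{\Rz}^n$ the function $(x,y)\mapsto\mathrm{dist}(y,S_x)$ is subanalytic; (ii) an infimum of a bounded subanalytic function taken over the sections of a relatively compact subanalytic set is a subanalytic function of the parameter. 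Put $u(x,y):=\mathrm{dist}(y,A_x)$ and $v(x,y):=\mathrm{dist}(y,A_x\cap B_x)$, both subanalytic by (i); by compactness of the sections one has, for $x\in\pi(A\cap B)$ and $y\in B_x$, that $u(x,y)=0\iff y\in A_x\iff y\in A_x\cap B_x\iff v(x,y)=0$. (Equivalently, {\L}ojasiewicz's theorem applied to each fibre already yields $(\ast)$ for fixed $x$ with some exponent; the whole point is to make that exponent independent of $x$.)

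Next I would introduce, for $x\in\pi(A\cap B)$ and $\varepsilon\ge0$ realized as $v(x,y)$ for some $y\in B_x$,
\[\rho(x,\varepsilon):=\inf\bigl\{\,u(x,y)\ :\ y\in B_x,\ v(x,y)=\varepsilon\,\bigr\},\]
which is subanalytic by (ii). Fix an $x$ for which $v(x,\cdot)$ attains values accumulating at $0$. Then $\rho(x,\cdot)$ is a nonnegative subanalytic function of one variable on an interval $[0,\delta)$, with $\rho(x,0)=0$ and $\rho(x,\varepsilon)>0$ for $\varepsilon>0$ (if $u(x,y)=0$ for $y\in B_x$ then $y\in A_x\cap B_x$, so $v(x,y)=0$). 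By the Puiseux description of one‑variable subanalytic functions it has a well‑defined order of vanishing at $0$, a rational $\alpha(x)\in{\Rz}_{>0}$, and $\rho(x,\varepsilon)\ge c(x)\,\varepsilon^{\alpha(x)}$ near $0$ for some $c(x)>0$. The function $x\mapsto\alpha(x)$ is then subanalytic, being the pointwise limit as $\varepsilon\to0^{+}$ of the subanalytic function $(x,\varepsilon)\mapsto\log\rho(x,\varepsilon)/\log\varepsilon$, and limits of subanalytic functions are subanalytic.

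The key observation comes now: the image $\alpha\bigl(\pi(A\cap B)\bigr)$ is a subanalytic subset of ${\Rz}$ contained in $\mathbb{Q}$, hence — a subanalytic subset of the line being a finite union of points and open intervals, and every nondegenerate interval containing irrationals — it is a \emph{finite} set. I would then set $r:=\max\alpha\bigl(\pi(A\cap B)\bigr)$ (rounded up to an element of ${\Nt}$ if desired) and take
\[C(x):=\inf\bigl\{\,\min\{1,\,u(x,y)\,v(x,y)^{-r}\}\ :\ y\in B_x\,\bigr\},\]
with the convention $u(x,y)\,v(x,y)^{-r}:=+\infty$ when $v(x,y)=0$. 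This is an infimum of a bounded subanalytic function over the sections of the relatively compact subanalytic set $B$, so $C$ is subanalytic on $\pi(A\cap B)$, and $(\ast)$ is immediate from the definition (trivially where $v(x,y)=0$). For the positivity $C(x)>0$: since the points of $A_x\cap B_x$ contribute the value $1$, it suffices that $\inf\{u(x,y)v(x,y)^{-r}:y\in B_x,\ v(x,y)>0\}>0$, and this infimum is approached either along $y_k\in B_x$ with $v(x,y_k)\to\varepsilon^{*}>0$, where it is $\ge\eta(x)/(\mathrm{diam}\,B_x)^{r}>0$ with $\eta(x)>0$ bounding $u(x,\cdot)$ from below on the compact set $\{y\in B_x:v(x,y)\ge\varepsilon^{*}/2\}$ (on which $u>0$ since $A_x$ is closed), or along $y_k$ with $\varepsilon_k:=v(x,y_k)\to0$, where then $v(x,\cdot)$ accumulates at $0$, $\alpha(x)$ is defined, $\alpha(x)\le r$, and $u(x,y_k)\ge\rho(x,\varepsilon_k)\ge c(x)\,\varepsilon_k^{\alpha(x)}\ge c(x)\,\varepsilon_k^{r}$.

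I expect the genuine obstacle to be precisely the passage from the fibrewise {\L}ojasiewicz exponent to a uniform one: one must verify that $\alpha(x)$ is a subanalytic function of the parameter — which is where relative compactness is really used, through the stability of subanalyticity under pointwise limits — and that it is rational‑valued, so that its subanalytic image, confined to $\mathbb{Q}$, cannot contain an interval and is therefore finite. Everything else is routine bookkeeping with parametrized distances and infima.
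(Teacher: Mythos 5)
First, a point of reference: the paper does not prove this theorem at all --- it is quoted from \L ojasiewicz--Wachta \cite{LW} --- so there is no in-paper argument to compare you with. Your reconstruction follows what is indeed the standard route: fibrewise \L ojasiewicz exponent, rationality, definability of the exponent in the parameter, finiteness of a rational-valued subanalytic image. The bookkeeping with $u$, $v$, $\rho$ and the positivity of $C(x)$ is sound, and you are right to take $A,B$ closed (for non-closed sets the inequality as stated can actually fail, so this is a necessary correction of the statement rather than a loss of generality).

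The genuine gap sits exactly at the crux, the subanalyticity of $\alpha$. You obtain $\alpha(x)$ as the pointwise limit of $(x,\varepsilon)\mapsto\log\rho(x,\varepsilon)/\log\varepsilon$ and invoke ``limits of subanalytic functions are subanalytic''. But that principle, like the other facts you rely on (images of subanalytic sets are subanalytic; a subanalytic subset of the line is a finite union of points and intervals), belongs to the theory of \emph{globally} subanalytic, i.e.\ $\mathbb{R}_{\mathrm{an}}$-definable, data --- and $\log$ is not globally subanalytic: $\mathbb{R}_{\mathrm{an}}$ is polynomially bounded, which is precisely the reason the exponents are rational in the first place. So the function whose limit you take is not in the class to which the limit theorem applies, and the uniformity of the exponent --- the whole content of the theorem --- is left unjustified as written. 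Two standard repairs: either replace $\log\rho/\log\varepsilon$ by the logarithmic derivative $\varepsilon\,\partial_\varepsilon\rho(x,\varepsilon)/\rho(x,\varepsilon)$, which \emph{is} subanalytic on a subanalytic set containing, for each $x$, an interval $(0,\delta_x)$ (subanalytic functions being piecewise analytic) and converges to $\alpha(x)$ by the Puiseux expansion; or bypass $\alpha$ entirely by applying the subanalytic preparation theorem (Parusi\'nski, Lion--Rolin), or a cell decomposition of the domain of $\rho$ with Puiseux data in $\varepsilon$, which directly produces finitely many rational exponents $q_C$ with $\rho(x,\varepsilon)$ comparable to $a_C(x)\varepsilon^{q_C}$ on each cell adherent to $\{\varepsilon=0\}$. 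With either repair the rest of your argument goes through. (Two cosmetic points: $\alpha(x)$ may be $0$ when $\rho(x,\cdot)$ has a positive limit at $0$, which only helps; and $\varepsilon_k^{\alpha(x)}\geq\varepsilon_k^{r}$ needs $\varepsilon_k\leq 1$, harmless since $\varepsilon_k\to 0$.)
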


\begin{rem}
The inequality $(\ast)$ is called {\it \L ojasiewicz inequality} and the sets $A_x$ and $B_x$ satisfying it are said to be {\it regularly separated}.
\end{rem}

From this theorem we derive the following {\L ojasiewicz inequality with parameter}:
\begin{thm}
Let $f\colon \om\to {\Rz}^k$ be a subanalytic function, where $\om\subset{\Rz}^m\times {\Rz}^n$ is relatively compact. Let $f_x(y):=f(x,y)$ for $x\in\pi(\om)$ and $y\in{\om}_x$. Then there exists an exponent $r>0$ such that for each $x\in \pi(\om)$
$$||f_x(y)||\geq C(x)\mathrm{dist}(y,f_x^{-1}(0))^r,\quad\hbox{for}\ y\in {\om}_x$$
for some $C(x)>0$.\\
Moreover the function $C\colon \pi(\om)\to(0,+\infty)$ may be chosen to be subanalytic.
\end{thm}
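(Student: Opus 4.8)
The plan is to deduce the statement from Theorem~\ref{Lojparam} (regular separation with parameter), taking for one of the two sets the graph of $f$ and for the other the zero section, and keeping $x$ in the role of the separation parameter. Two routine reductions come first. Replacing $\om$ by the relatively compact subanalytic set $\{(x,y)\in\om\mid\|f(x,y)\|\le 1\}$ leaves the fibres $f_x^{-1}(0)$ unchanged, since $f(x,y)=0$ forces $\|f(x,y)\|\le 1$; on its complement one has $\|f_x(y)\|>1$ while $\mathrm{dist}(y,f_x^{-1}(0))\le\operatorname{diam}\overline{\om}$ (because $f_x^{-1}(0)\subset\om_x$ and $\overline{\om}$ is compact), so the asserted inequality holds there with any positive exponent and a suitable constant. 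Hence we may assume $f$ bounded. Secondly, where $f_x^{-1}(0)=\emptyset$ there is nothing to prove, so such $x$ may be set aside.

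Now work in ${\Rz}^m\times({\Rz}^n\times{\Rz}^k)$, viewing ${\Rz}^m$ (the variable $x$) as the parameter space of Theorem~\ref{Lojparam} and ${\Rz}^n\times{\Rz}^k$ (the variables $(y,z)$) as the fibre space, and put
$$A:=\om\times\{0\},\qquad B:=\{(x,y,z)\mid (x,y)\in\om,\ z=f(x,y)\}=\mathrm{graph}\,f.$$
Since $f$ is bounded and $\om$ relatively compact, $A$ and $B$ are relatively compact subanalytic sets, so Theorem~\ref{Lojparam} applies --- note that it does not require the two sets to be closed, which is exactly what is needed here. Their sections over $x$ are $A_x=\om_x\times\{0\}$, $B_x=\mathrm{graph}\,f_x$, and, most importantly, $(A\cap B)_x=f_x^{-1}(0)\times\{0\}$. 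It is precisely to get this last identity that $B$ is taken to be the graph of $f$ itself rather than, say, its closure, whose sections may be strictly larger than $f_x^{-1}(0)$.

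Theorem~\ref{Lojparam} then yields an exponent $r>0$ independent of $x$ and a subanalytic function $C$ such that $\mathrm{dist}\bigl((y,z),A_x\bigr)\ge C(x)\,\mathrm{dist}\bigl((y,z),(A\cap B)_x\bigr)^r$ for all $x$ in the projection of $A\cap B$ onto ${\Rz}^m$ and all $(y,z)\in B_x$. We evaluate this at $(y,z)=(y,f_x(y))$ for $y\in\om_x$ (so that indeed $(y,z)\in B_x$). On the left, since $y\in\om_x$ the nearest point of $A_x$ to $(y,f_x(y))$ is $(y,0)$, so the left-hand side equals $\|f_x(y)\|$. On the right, $\mathrm{dist}\bigl((y,f_x(y)),(A\cap B)_x\bigr)=\bigl(\mathrm{dist}(y,f_x^{-1}(0))^2+\|f_x(y)\|^2\bigr)^{1/2}\ge\mathrm{dist}(y,f_x^{-1}(0))$. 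Hence $\|f_x(y)\|\ge C(x)\,\mathrm{dist}(y,f_x^{-1}(0))^r$ for all $y\in\om_x$, for every $x$ with $f_x^{-1}(0)\ne\emptyset$ --- the projection of $A\cap B$ onto ${\Rz}^m$ being exactly $\{x\in\pi(\om)\mid f_x^{-1}(0)\ne\emptyset\}$. Extending $C$ by the constant $1$ on the remaining fibres preserves subanalyticity, and combining with the trivial estimate from the reduction step gives the theorem with the required subanalytic $C$ and $x$-independent $r$.

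All the estimates here are elementary, so the one genuine decision is the choice of the pair $(A,B)$, and the single point that really needs care is that $B$ must be the graph of $f$ and not its closure: passing to the closure would replace $f_x^{-1}(0)$ by a possibly larger ``essential zero fibre'' and so yield only a weaker conclusion. Beyond this observation, and the purely routine handling of unbounded $f$ and of empty zero fibres, I do not foresee any substantial obstacle.
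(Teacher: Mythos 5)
Your proof is correct and follows essentially the same route as the paper: both apply Theorem~\ref{Lojparam} to the graph of $f$ and the zero section $\om\times\{0\}$, merely with the roles of $A$ and $B$ interchanged (you evaluate the separation inequality at points of the graph, whereas the paper evaluates it at points of $\om_x\times\{0\}$, where $\mathrm{dist}((y,0),\mathrm{graph}\,f_x)\leq\|f_x(y)\|$ suffices). Your additional reductions --- truncating so that $f$ is bounded and the graph is genuinely relatively compact, and setting aside the fibres with $f_x^{-1}(0)=\varnothing$ --- address points the paper passes over silently.
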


\begin{proof} Let $A$ be the graph of $f$ and $B:=\om\times \{0\}$. Both are subanalytic subsets of ${\Rz}^m\times {\Rz}^n\times{\Rz}^k$. It is clear that $f^{-1}(0)\times\{0\}=A\cap B$ and $f_x^{-1}(0)\times\{0\}=(A\cap B)_x=A_x\cap B_x$ for $x\in\pi(\om)$. 

Moreover, since $A_x=\{(y,f_x(y))\mid y\in\om_x\}$, one easily sees that $\mathrm{dist}((y,0), A_x)=||f_x(y)||$. 
Finally, $\mathrm{dist}((y,0),A_x\cap B_x)=\mathrm{dist}(y,f_x^{-1}(0))$ and so theorem \ref{Lojparam} gives the result. 
\end{proof}

For the convenience of the reader we will shortly recall the notion of {\it definable cell decomposition} for the o-minimal structure of (totally) subanalytic sets (see e.g. \cite{C}). 

\begin{dfn}
A set $C\subset{\Rz}^m$ is called a {\it (sub)analytic cell} if\\
(i) for $m=1$ the set $C$ is either a point or an open non-empty interval,\\
(ii) for $m>1$ the set $C$ is either the graph
$$C=f=\{(x,t)\in{\Rz}^{m-1}\times{\Rz}\mid x\in C'\ \hbox{\it and}\ t=f(x)\}$$ 
of a continuous subanalytic (and analytic) function $f\colon C'\to {\Rz}$, where $C'$ is a (sub)analytic cell in ${\Rz}^{m-1}$, 
or $C$ is a {\it (sub)analytic prism} 
$$C=(f_1,f_2):=\{(x,t)\in{\Rz}^{m-1}\times{\Rz}\mid x\in C'\ \hbox{\it and}\ f_1(x)<t<f_2(x)\}$$
where $C'$ is a (sub)analytic cell in ${\Rz}^{m-1}$ and $f_j\colon C'\to{\Rz}\cup\{\pm\infty\}$ are two continuous subanalytic (and analytic) functions such that $f_1<f_2$ on $C'$ and $f_j$ has either values in ${\Rz}$ or is constant.
\end{dfn}

Note that an analytic cell is always an analytic manifold. 
Let now $\mathscr{A}$ be a finite family of subanalytic sets in ${\Rz}^m$.

\begin{dfn}
We say that a family of (sub)analytic cells in ${\Rz}^m$, say $\mathscr{C}$, is a {\it (sub)analytic cell decomposition}, if the family $\mathscr{C}$ is finite, its elements are pairwise disjoint, $\bigcup\mathscr{C}={\Rz}^m$ and $\{\pi(C)\mid C\in\mathscr{C}\}$ is a (sub)analytic cell decomposition of ${\Rz}^{m-1}$, where $\pi$ is the natural projection onto the first $m-1$ coordinates. We say that $\mathscr{C}$ is adapted to (or: compatible with) the family $\mathscr{A}$, if for any $C\in\mathscr{C}$, $A\in\mathscr{A}$, one has either $C\subset A$ or $C\cap A=\varnothing$. 
\end{dfn}

Recall that for any finite family of subsets of ${\Rz}^m$, definable in an o-minimal structure on the field ${\Rz}$, there is a definable cell decomposition of ${\Rz}^m$ adapted to that family (for all these notions see e.g. \cite{C}). Then for any fixed $k>0$, taking a refinement of this decomposition we achieve a definable cell decomposition of class $\mathscr{C}^k$. 
Now recall Tamm's lemma (see \cite{DS}):

\begin{lem}
Let $\om\subset{\Rz}^m$ be open and let $f\colon \om\to{\Rz}$ be a subanalytic function, locally bounded in ${\Rz}^m$. Then there exists a $k\in{\Nt}$ such that for each $x\in \om$, if $f$ is of class $\mathscr{C}^k$ in $x$, then $f$ is analytic in $x$.
\end{lem}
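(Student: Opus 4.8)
The plan is to prove this lemma --- which is due to Tamm --- by deducing it from the \L ojasiewicz inequality with parameter established above, arguing by induction on $m$; since the assertion is local in $x$, I would assume from the start that $\om$ is relatively compact. The first step is to record that, by the existence of locally finite analytic stratifications of subanalytic sets (see \cite{DS}), $f$ is analytic off a closed nowhere dense subanalytic subset of $\om$, so that the \emph{analytically singular locus} $\Sigma:=\{x\in\om\mid f\ \text{is not analytic at}\ x\}$ is closed in $\om$, subanalytic and of dimension $\le m-1$. Writing $\Sigma_k$ for the (likewise closed and subanalytic) set of points where $f$ is not of class $\mathscr{C}^k$, one has $\Sigma_0\subset\Sigma_1\subset\cdots\subset\Sigma$, and the assertion is equivalent to $\Sigma_k=\Sigma$ for some $k$.

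The decisive ingredient will be a bound on the ``analyticity defect'' that is uniform in the base point. Since partial derivatives of subanalytic functions are subanalytic, for each fixed $k$ the map
$$F(x_0,x):=f(x)-\sum_{|\alpha|\le k}\frac{\partial^{\alpha}f(x_0)}{\alpha!}\,(x-x_0)^{\alpha},$$
defined for $x_0\in\om\setminus\Sigma_k$ and $x$ in a fixed small ball about $x_0$, is subanalytic on a relatively compact set. Applying the \L ojasiewicz inequality with parameter to $F$, with $x_0$ playing the role of the parameter, I would obtain an exponent $r>0$ \emph{independent of $x_0$} and a positive subanalytic function $C(\cdot)$ with $|F(x_0,x)|\ge C(x_0)\,\mathrm{dist}\bigl(x,F(x_0,\cdot)^{-1}(0)\bigr)^{r}$ for $x$ near $x_0$. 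I would then fix an integer $k\ge r$: if $f$ is $\mathscr{C}^k$ at $x_0$, Taylor's formula gives $F(x_0,x)=o(|x-x_0|^{k})$ as $x\to x_0$.

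Suppose now that $x_0\in\Sigma\setminus\Sigma_k$, and consider first the case where $x_0$ lies in the interior of the $(m-1)$-dimensional stratum of $\Sigma$. By the inductive hypothesis applied to the lower-dimensional set $\Sigma$ in its geometric form --- namely that the $\mathscr{C}^k$-regular locus and the analytically regular locus of $\Sigma$ coincide --- the set $\Sigma$ is then an analytic hypersurface near $x_0$, so $\om\setminus\Sigma$ has there exactly two components, each of full dimension at $x_0$, on each of which $f$, hence $F(x_0,\cdot)$, is analytic. If $F(x_0,\cdot)$ did not vanish identically on one of them, its zero set would be there a proper analytic subset; choosing a direction $v$ pointing into that component and transverse to the tangent cone of $F(x_0,\cdot)^{-1}(0)$ at $x_0$, one would get $\mathrm{dist}(x_0+sv,F(x_0,\cdot)^{-1}(0))\ge c\,s$ for small $s>0$, hence $|F(x_0,x_0+sv)|\ge C(x_0)\,c^{r}s^{r}$, which is incompatible with $o(s^{k})$ since $k\ge r$. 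Therefore $F(x_0,\cdot)\equiv 0$ on both components, $f$ coincides near $x_0$ with its Taylor polynomial of degree $k$ at $x_0$, and so $f$ is analytic at $x_0$ --- contradicting $x_0\in\Sigma$. Consequently $\Sigma\setminus\Sigma_k$ is contained in the frontier of that stratum together with the lower-dimensional strata of $\Sigma$, a subanalytic set of dimension $\le m-2$.

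The main obstacle will be to propagate the argument down those lower strata. At a point $x_0$ of a lower stratum the components of $\om\setminus\Sigma$ may reach $x_0$ only through cuspidal sectors, so no straight transversal is available, and a curved one enters with an a priori unbounded order of contact, which would make the threshold $k$ depend on $x_0$. This is precisely where the subanalytic hypothesis is indispensable. I would handle it by a descending induction on $\dim\Sigma$, together with a further appeal to the geometric form of the statement applied to $\Sigma$ itself: one straightens, in suitable subanalytic charts, the cuspidal sectors abutting $x_0$, reducing to a configuration as above in which the orders of contact that occur are bounded \emph{uniformly}, by the exponent that the \L ojasiewicz inequality with parameter delivers in the new chart. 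The impossibility of an unbounded cuspidal exponent --- equivalently, the non-existence of ``flat'' subanalytic functions such as $x\mapsto e^{-1/x}$, excluded by the \L ojasiewicz inequality --- is what ultimately forces a single $k$ to work on all of $\om$; it is also what fails in larger o-minimal structures, in which Tamm's lemma does not hold.
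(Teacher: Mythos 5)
The paper itself offers no proof of this statement: it is Tamm's theorem, simply quoted from \cite{DS}, so your attempt has to be judged on its own terms. Unfortunately there is a fatal circularity at the centre of your argument, in the step ``I would then fix an integer $k\ge r$''. The exponent $r$ is produced by applying the \L ojasiewicz inequality with parameter to the Taylor remainder $F(x_0,x)=f(x)-\sum_{|\alpha|\le k}\frac{\partial^\alpha f(x_0)}{\alpha!}(x-x_0)^\alpha$, and this function already depends on $k$; hence $r=r(k)$, and you would need $k\ge r(k)$ for some $k$. This can never happen: already for $m=1$ and $f(x)=x^{k+1}$ near $x_0=0$ the remainder is $F(0,x)=x^{k+1}$ with zero set $\{0\}$, and the inequality $|x|^{k+1}\ge C\,|x|^{r}$ for small $x$ forces $r\ge k+1>k$. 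More generally, at any point where $f$ is analytic but not a polynomial the remainder vanishes to order at least $k+1$ along a generic line while its zero set is a proper analytic germ, so the uniform exponent for the family $\{F(x_0,\cdot)\}$ must exceed $k$. Consequently the ``incompatibility'' you derive at a point of $\Sigma\setminus\Sigma_k$ --- namely $C(x_0)c^{r}s^{r}\le o(s^{k})$ with $k\ge r$ --- never materialises; the correct reading of that computation is simply that $r>k$, and no contradiction with $x_0\in\Sigma$ follows. The same confusion shows in your intermediate conclusion that $F(x_0,\cdot)\equiv 0$, i.e.\ that $f$ coincides with a polynomial near $x_0$: that is far stronger than analyticity and is false for, say, $f(x)=\sqrt{1+x}$ near $0$, which signals that the mechanism cannot be the right one.

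There are secondary issues as well: the subanalyticity of the non-analyticity locus $\Sigma$ is normally obtained as a \emph{consequence} of Tamm's theorem rather than used as an input to it, and your last paragraph (straightening cuspidal sectors by a descending induction on $\dim\Sigma$) is a statement of intent rather than an argument. The known proofs of Tamm's lemma (Tamm's original one, and those of Bierstone--Milman and Kurdyka--Parusi\'nski) obtain the uniform threshold $k$ from a uniform bound on the denominators of the exponents in a parametrized Puiseux or preparation expansion of $f$, not from a \L ojasiewicz exponent of the $k$-th Taylor remainder; that uniform bound on fractional exponents is the missing idea your plan would need.
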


From this lemma along with the preceding observation we easily derive the following theorem on analytic cell decomposition. Indeed, for cells given by bounded subanalytic functions one may take a refinement of the decomposition so as to get $\mathscr{C}^k$ functions for an appropriate $k>0$, whence, by Tamm's lemma, analytic cells.

\begin{thm}\label{acd}
Let $E\subset{\Rz}^m$ be a bounded subanalytic set. Then there exists a finite decomposition of $E$ into a disjoint sum of analytic cells. 
\end{thm}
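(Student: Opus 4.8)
The plan is to reduce Theorem~\ref{acd} to the two ingredients that have just been assembled: the existence of a \emph{definable} (subanalytic) cell decomposition adapted to a finite family, upgraded to class $\mathscr{C}^k$ by refinement, and Tamm's lemma, which promotes $\mathscr{C}^k$-regularity to analyticity once $k$ is chosen large enough. Since $E$ is bounded, I may work inside a fixed bounded box $B\subset{\Rz}^m$ containing $\overline E$, and all the subanalytic functions entering the construction will be (locally) bounded there, so Tamm's lemma applies to each of them.

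First I would apply the cell decomposition theorem for the o-minimal structure of subanalytic sets to the finite family $\mathscr{A}=\{E\}$ (together with the box $B$, if one wants everything confined to $B$), obtaining a finite subanalytic cell decomposition $\mathscr{C}$ of ${\Rz}^m$ compatible with $E$; then $E$ is the disjoint union of those $C\in\mathscr{C}$ with $C\subset E$. Each such cell is described inductively by finitely many continuous subanalytic functions $f\colon C'\to{\Rz}$ (graphs) or pairs $f_1<f_2$ (prisms), all bounded on the relevant cells because $E\subset B$. The point of Tamm's lemma is that there is a single $k\in{\Nt}$ working for all of the finitely many defining functions simultaneously: take the maximum of the exponents supplied by the lemma for each of them. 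Now refine $\mathscr{C}$ to a subanalytic cell decomposition of class $\mathscr{C}^k$ — this is the standard fact recalled just before Tamm's lemma. On each cell of the refined decomposition the defining functions are $\mathscr{C}^k$, hence by Tamm's lemma they are analytic at every point of that cell, so the refined cells contained in $E$ are genuine analytic cells in the sense of the Definition. Their disjoint union is still all of $E$, which yields the claimed finite decomposition of $E$ into analytic cells.

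The only subtlety — and the step I would be most careful about — is bookkeeping the regularity through the inductive/recursive structure of a cell. A cell $C\subset{\Rz}^m$ is built over a cell $C'\subset{\Rz}^{m-1}$, so ``$C$ is an analytic cell'' requires not merely that its own defining function $f$ (or $f_1,f_2$) be analytic on $C'$, but that $C'$ itself be an analytic cell, recursively down to dimension $1$. Thus when I refine to class $\mathscr{C}^k$ and invoke Tamm's lemma, I must do so for the whole tower of projections at once: apply it to every defining function occurring at every level of every cell in $\mathscr{C}$, take $k$ to be the maximum over this finite collection, and then a single $\mathscr{C}^k$-refinement makes all of them simultaneously analytic. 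With $k$ chosen this way the induction on $m$ goes through transparently — for $m=1$ cells are points or open intervals and there is nothing to prove — and the refined cells inside $E$ are analytic cells at every level. No new estimates are needed; the work is entirely in correctly threading Tamm's uniform $k$ through the cell hierarchy, exactly as indicated in the remark preceding the statement.
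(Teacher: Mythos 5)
Your proposal is correct and follows essentially the same route as the paper, which likewise derives the theorem by taking a subanalytic cell decomposition adapted to $E$, refining it to class $\mathscr{C}^k$ for a suitable $k$, and invoking Tamm's lemma to upgrade the defining functions to analytic ones. Your extra care about choosing a single $k$ for the finitely many defining functions and threading the argument through the recursive tower of projections (by induction on $m$) is exactly the bookkeeping the paper leaves implicit in its two-sentence justification.
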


Let us introduce two useful notations for the closed and the open unit cube in ${\Rz}^n$: 
$$
I^n:=\{y\in{\Rz}^n\mid ||y||\leq 1\},\quad J^n:=\{y\in{\Rz}^n\mid ||y||<1\}
$$
and $I:=I^1$, $J:=J^1$. 
Since we will follow the main idea of \cite{PP}, we shall need the following `parameter version' of \cite{PP} proposition 6.3:
\begin{thm}\label{csl}
Let $E\subset{\Rz}_z^k\times{\Rz}_w^n$ be a bounded open subanalytic set and let $\pi(z,w)=z$ be the natural projection. Then 
\begin{enumerate}

\item[(i)] For each point $z\in\pi(E)$ there is a finite number, say $s_z$, of subanalytic mappings $\varphi^z_j\colon I^n\times [-1,1]\to {\Rz}^n$ analytic in the open set $J^n\times(-1,1)$ and satisfying \begin{enumerate}
\item $\varphi_j^z(I^n\times\{t\})\subset E_z$ for $t\in [-1,1]\setminus\{0\}$ and $j=1,\dots,s_z$,

\item $\bigcup_{j=1}^{s_z} \varphi_j^z(I^n\times\{0\})=\overline{E_z}$.
\end{enumerate}

\item[(ii)] There is an integer $s\in{\Nt}$ such that $s_z\leq s$ for all $z\in\pi(E)$. 

\item[(iii)] All the mappings $$\pi(E)\times{I}^n\times{[-1,1]}\ni(z,y,t)\mapsto \varphi_j^z(y,t)\in{\Rz}^n$$ are subanalytic, while $\varphi_j^z(y,\cdot)$ are analytic in $(-1,1)$ for all points $(z,y)\in \pi(E)\times I^n$.
\end{enumerate}
\end{thm}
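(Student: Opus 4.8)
The plan is to mimic the proof of \cite{PP} proposition 6.3 but carry out the whole construction \emph{with the parameter $z$ present}, using the analytic cell decomposition (Theorem \ref{acd}) applied to $E$ as a subset of $\Rz^k\times\Rz^n$ rather than to each section $E_z$ separately. First I would take a finite decomposition of (a bounded subanalytic set containing) $\overline{E}$ into analytic cells, compatible with $E$ and with $\partial E$, and arrange it so that $\pi$ (projection onto the $z$-coordinates) is compatible with the decomposition in the cell-decomposition sense; then for each cell $C\subset E$ of maximal fibre dimension (i.e.\ with $\dim C_z=n$ for $z\in\pi(C)$) the fibre $C_z$ is, for each fixed $z\in\pi(C)$, an open analytic cell in $\Rz^n$ depending subanalytically on $z$. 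The cells $C$ with $\pi(C)$ of full dimension $k$ and $\dim C_z<n$ contribute nothing to $\mathrm{int}(E_z)=E_z$ (as $E$ is open, $E_z$ is open), but their \emph{closures} may be needed to cover the boundary $\overline{E_z}$; since $E$ is fat in each variable (being open), $\overline{E_z}=\overline{\mathrm{int}\,E_z}$, so it suffices to parametrise, and take closures of, the top-fibre-dimensional cells.

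The core step is the following: for a single analytic cell $C\subset\Rz^k\times\Rz^n$ whose generic fibre $C_z$ is an $n$-dimensional open cell, produce a subanalytic map $\Phi_C\colon \pi(C)\times I^n\times[-1,1]\to\Rz^n$, analytic in $(z,y,t)$ on $\pi(C)\times J^n\times(-1,1)$ for $z$ fixed in the appropriate sense, with $\Phi_C(z,I^n\times\{t\})\subset C_z$ for $t\neq 0$ and $\Phi_C(z,I^n\times\{0\})=\overline{C_z}$. This is done by induction on $n$ exactly as in the non-parametric curve-selecting / cuspidal-cube construction of \cite{PP}: an $n$-dimensional cell over $\pi(C)$ is a prism $f_1(z,\cdot)<w_n<f_2(z,\cdot)$ over an $(n-1)$-dimensional cell in the $(z,w_1,\dots,w_{n-1})$-space, the functions $f_1,f_2$ being continuous subanalytic and analytic on the open part; one first applies the inductive hypothesis to the base cell to get a parametrised map onto its closure in $\Rz^{n-1}$, then fills in the last coordinate by an affine interpolation between $f_1$ and $f_2$ composed with a cuspidal rescaling in $t$ (the standard $t\mapsto t^N$ trick) that pushes the image strictly inside the prism for $t\neq 0$ while degenerating to the whole closed prism at $t=0$. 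The subanalyticity of the total map in $(z,y,t)$ follows because it is built by composition, affine combination and the operations preserving subanalyticity from the subanalytic data $f_1,f_2$ and the inductively constructed base map; fibrewise analyticity on the open set follows from analyticity of the $f_j$ there and of the interpolation formulas. Item (ii), the uniform bound $s_z\le s$, is immediate: $s_z$ is at most the number of top-fibre-dimensional cells $C$ with $z\in\pi(C)$, and there are only finitely many cells in the decomposition, so $s:=$ (number of such cells) works; item (iii) is exactly the subanalyticity and fibrewise analyticity recorded during the construction.

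The main obstacle I anticipate is \emph{bookkeeping the closure relation uniformly in $z$}: one needs $\bigcup_j\varphi^z_j(I^n\times\{0\})=\overline{E_z}$ for \emph{every} $z\in\pi(E)$, including the ``small'' parameters $z$ lying in lower-dimensional strata of $\pi(E)$, where some cells $C$ with $z\in\pi(\overline C)\setminus\pi(C)$ may be the ones whose fibre closures are needed to cover $\overline{E_z}$. Handling this cleanly requires either enlarging the cell decomposition so that $\pi(E)$ itself is a cell (so no $z$ is ``exceptional'' for the relevant cells), or an additional argument showing $\overline{E_z}=\bigcup_{C}\overline{C_z}$ where the union is over cells $C$ with $z\in\pi(C)$, which uses that $E$ is open (hence $E_z$ open, hence $\overline{E_z}=\overline{E}_z$ up to the cells meeting the fibre) together with the continuity of the cell-defining functions up to the boundary of each cell. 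A secondary, purely technical, point is ensuring the parametrised maps are genuinely \emph{analytic} in the open region and not merely $\mathscr{C}^k$: this is guaranteed by invoking Tamm's lemma via Theorem \ref{acd} at the outset, so that all the $f_j$ entering the construction are analytic on the open cells, and then observing that all subsequent operations (affine combinations, the polynomial rescaling in $t$, composition with analytic base maps) preserve analyticity there.
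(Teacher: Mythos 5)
Your proposal is correct and follows essentially the same route as the paper: an analytic cell decomposition of $E$ jointly in $(z,w)$ (Theorem \ref{acd} plus Tamm's lemma), induction on $n$ through the prism structure, an affine interpolation between the bounding functions multiplied by a factor that equals $1$ at $t=0$ and is $<1$ for $t\neq 0$ (the paper uses $(1-t^2)$), and the finiteness of the decomposition for the uniform bound $s$. The ``main obstacle'' you flag -- covering $\overline{E_z}$ uniformly, including over parameters $z$ where the base cells degenerate -- is precisely what the paper's Lemma \ref{lemat} handles, by extending the interpolation over $N=\overline{\pi(E)}\setminus\pi(E)$ via the subanalytic functions $\kappa(z)=\min F_z$ and $\lambda(z)=\max F_z$, and your alternative argument (openness of $E_z$ forces $\overline{E_z}=\bigcup_C\overline{C_z}$ over the top-fibre-dimensional cells) is also sound.
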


\begin{rem}\label{obs} Note that adding constant functions we may always assume that $s_z=s$ for all $z\in\pi(E)$.
%
\end{rem}

We shall need the following two lemmata:

\begin{lem}
Let $N\subset{\Rz}_z^k$ be subanalytic and let $F\subset N\times (-c,c)$ be a subanalytic relatively compact set such that $\pi(F)=N$, where $\pi(z,t)=z$ is the natural projection. Then the functions 
$$\kappa\colon N\ni z\mapsto \min F_z\in{\Rz}\quad \hbox{\it and}\quad \lambda\colon N\ni z\mapsto\max F_z\in{\Rz}$$
are subanalytic.
\end{lem}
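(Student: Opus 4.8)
The plan is to reduce the statement to a question about the graphs of $\kappa$ and $\lambda$ and to invoke the fact that $\min$ and $\max$ over a subanalytic family are subanalytic operations, expressible by first‑order formulas in the o‑minimal structure of (globally) subanalytic sets. Concretely, since $F$ is relatively compact and $\pi(F)=N$, for each $z\in N$ the section $F_z\subset(-c,c)$ is a non‑empty relatively compact subanalytic subset of ${\Rz}$, so $\overline{F_z}$ is compact and $\min F_z$ and $\max F_z$ are well defined as $\min\overline{F_z}$, $\max\overline{F_z}$ (I would note that $\kappa(z)=\inf F_z$ and $\lambda(z)=\sup F_z$, the values being attained in $\overline{F_z}$, so the formulas below are the ones to use). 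First I would write down the graph of $\kappa$:
\[
\Gamma_\kappa=\bigl\{(z,t)\in N\times(-c,c)\mid t\in\overline{F}_z\ \text{and}\ \forall s\ \bigl((z,s)\in\overline{F}\Rightarrow t\le s\bigr)\bigr\},
\]
and symmetrically for $\lambda$ with the inequality reversed.

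Next I would justify that this set is subanalytic. The closure $\overline{F}$ of a relatively compact subanalytic set is subanalytic (closure is one of the basic stability properties recalled via \cite{DS}, and here everything lives in the bounded, hence globally subanalytic, setting). The set $\Gamma_\kappa$ is then obtained from $\overline{F}$ by a finite sequence of the operations under which subanalytic sets are stable: Cartesian product with ${\Rz}$, intersection, complement, and projection — indeed the universally quantified clause $\forall s\,((z,s)\in\overline F\Rightarrow t\le s)$ is the complement of the projection along the $s$‑axis of the subanalytic set $\{(z,t,s)\mid (z,s)\in\overline F,\ t>s\}$. Since $N\times(-c,c)$ is relatively compact, all these projections are projections of relatively compact sets, so we stay within the subanalytic category; equivalently, one may simply observe that all the sets involved are bounded and hence globally subanalytic, a class closed under first‑order definability, and the displayed formula is first‑order. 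Hence $\Gamma_\kappa$ and $\Gamma_\lambda$ are subanalytic, and being the graphs of the (single‑valued, everywhere‑defined on $N$) functions $\kappa,\lambda$, this is exactly the assertion that $\kappa$ and $\lambda$ are subanalytic maps in the sense of the definition in the Introduction.

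I do not expect a serious obstacle here; the only point requiring a word of care is the passage from $F_z$ to $\overline{F_z}$ — one must make sure that $\min$/$\max$ refer to the closure so that the extrema are attained, and that $(\overline{F})_z=\overline{F_z}$ is not silently assumed (it is the inclusion $(\overline F)_z\supseteq\overline{F_z}$ that matters, and it always holds, which is all the formula needs since we test membership of $t$ in $\overline F_z$, not minimality within $F_z$ itself). An alternative, slightly more hands‑on route — should one prefer to avoid quantifier manipulations — is to apply Theorem \ref{acd} to $\overline{F}\subset{\Rz}^{k+1}$ (it is bounded): one gets a finite analytic cell decomposition, and over each cell of $N$ (after refining so that the decomposition of ${\Rz}^{k+1}$ induces one of ${\Rz}^k$) the minimum and maximum of the fibre are realized by one of the finitely many cell‑boundary functions $f_1,f_2$, which are subanalytic; patching these finitely many subanalytic pieces together and taking pointwise $\min$/$\max$ over the finite list keeps the result subanalytic. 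Either argument yields the claim, and I would present the first, formula‑based one as the main proof because it is shortest.
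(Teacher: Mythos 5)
Your proof is correct, but it takes a genuinely different route from the paper's. The paper disposes of the lemma with a one-line distance-function trick: since $F_z\subset(-c,c)$ lies entirely to the right of $-c$, one has $\inf F_z=\mathrm{dist}(-c,F_z)-c$ and $\sup F_z=c-\mathrm{dist}(c,F_z)$, so the graphs of $\kappa$ and $\lambda$ are
$$\kappa=\{(z,t)\in N\times(-c,c)\mid \mathrm{dist}(-c,F_z)=c+t\},\qquad \lambda=\{(z,t)\in N\times(-c,c)\mid \mathrm{dist}(c,F_z)=c-t\},$$
which are subanalytic because the parametrized distance $(z,t)\mapsto\mathrm{dist}(t,F_z)$ is a subanalytic function (a standard fact the authors quote, and use again later). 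This avoids any quantifier manipulation and in particular does not invoke the theorem of the complement. Your argument instead defines the graph of $\kappa$ by a first-order formula and eliminates the universal quantifier as the complement of a projection, which is perfectly valid here because everything is bounded, hence globally subanalytic, and that class is closed under complements and projections; it is a heavier but more robust argument, independent of the special geometric position of $F_z$ relative to the endpoints of the interval. One genuine merit of your write-up is that you flag the $\min$ versus $\inf$ issue explicitly: the paper's formula in fact computes $\inf F_z=\min\overline{F_z}$, so the statement's $\min F_z$ is to be read as the infimum (attained on the closure), a point the paper passes over in silence. Your alternative sketch via Theorem \ref{acd} and cell decomposition would also work, but the formula-based argument you chose as the main one is the right call.
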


\begin{proof}
Similarly to \cite{DW} it suffices to observe that 
$$\kappa=\{(z,t)\in N\times(-c,c)\mid \mathrm{dist}(-c,F_z)=c+t\}$$ $$\hbox{and}\>\ \lambda=\{(z,t)\in N\times(-c,c)\mid \mathrm{dist}(c,F_z)=c-t\},$$
since the function $(z,t)\mapsto\mathrm{dist}(t,F_z)$ is subanalytic.
\end{proof}

\begin{lem}\label{lemat}
Let $E\subset{\Rz}_z^k\times{\Rz}_w$ be a bounded analytic cell and $\pi(z,w)=z$ the natural projection. Then there exists a subanalytic function 
$$\zeta\colon \overline{\pi(E)}\times I\times [-1,1]\to {\Rz}$$
which is analytic in $\pi(E)\times J\times (-1,1)$ and such that 
\begin{enumerate}
\item $\zeta(\{z\}\times I\times\{0\})=\overline{E_z}=(\overline{E})_z$ when $z\in\pi(E)$\\ and $\zeta(\{z\}\times I\times\{0\})=(\overline{E})_z$ when $z\in\overline{\pi(E)}\setminus\pi(E)$,
\item $\zeta(\{z\}\times J\times \{t\})\subset E_z$ for all $z\in\pi(E)$, $t\in[-1,1]\setminus\{0\}$,
\item $\zeta(z,u,\cdot)$ is analytic in $(-1,1)$ for all $u\in I$, $z\in\overline{\pi(E)}$. 
\end{enumerate}
\end{lem}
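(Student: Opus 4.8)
The plan is to exploit the cell dichotomy for $E$: over the analytic cell $C':=\pi(E)\subset\Rz^k$, either $E$ is the graph of a continuous analytic function $f\colon C'\to\Rz$, or $E$ is a prism $(f_1,f_2)$ with $f_1<f_2$ continuous and analytic on $C'$; since $E$ is bounded these functions are bounded. First I would introduce the two functions
$$\mu(z):=\min(\overline E)_z,\qquad M(z):=\max(\overline E)_z,\qquad z\in\overline{\pi(E)}=\pi(\overline E),$$
whose subanalyticity is exactly the content of the preceding lemma, applied to $N:=\pi(\overline E)$ and the relatively compact subanalytic set $F:=\overline E$ (for which $\pi(F)=N$). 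Because the functions defining a cell are continuous along the base cell, one has $(\overline E)_z=\overline{E_z}$ for every $z\in\pi(E)$, this set being $\{f(z)\}$ in the graph case and $[f_1(z),f_2(z)]$ in the prism case; this settles the equality $\overline{E_z}=(\overline E)_z$ claimed in (1) and shows that on $\pi(E)$ we have $\mu\equiv M\equiv f$ (graph case) or $\mu\equiv f_1$, $M\equiv f_2$ (prism case), so that $\mu,M$ are analytic on $\pi(E)$.

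The crucial step will be to show that for every $z\in\overline{\pi(E)}$ the fibre $(\overline E)_z$ equals the whole segment $[\mu(z),M(z)]$, reducing to a single point when $\mu(z)=M(z)$; in other words, that closure fibres of a cell are intervals. For $z\in\pi(E)$ this was just observed. For $z$ in the boundary of $\pi(E)$ I would write $(\overline E)_z=\bigcap_{\delta>0}\overline{S_\delta}$, where $S_\delta$ denotes the set of $w$ such that $(z',w)\in E$ for some $z'\in\pi(E)\cap B(z,\delta)$: over $\pi(E)\cap B(z,\delta)$ the set $E$ is a graph (resp.\ the band between $f_1$ and $f_2$), so $S_\delta$ is the continuous image of $\pi(E)\cap B(z,\delta)$ (resp.\ of a connected set built over it) and hence an interval, whence $(\overline E)_z$, being a decreasing intersection of compact intervals, is an interval. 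This reduces the matter to the fact that $\pi(E)\cap B(z,\delta)$ is connected for all small $\delta$, i.e.\ that an analytic cell is locally connected at each point of its closure; that is the step I expect to be the main obstacle. It is classical --- one may argue by induction on dimension with the help of the curve selection lemma --- and it is the only non-formal ingredient of the proof.

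Finally I would put, for $(z,u,t)\in\overline{\pi(E)}\times I\times[-1,1]$,
$$\zeta(z,u,t):=\frac{\mu(z)+M(z)}{2}+\frac{M(z)-\mu(z)}{2}\,u\,(1-t^2),$$
the same formula in both cases. This $\zeta$ is subanalytic, being a polynomial combination of the subanalytic functions $\mu,M$ and of the coordinates $u,t$; it is analytic on $\pi(E)\times J\times(-1,1)$ because there $\mu,M$ coincide with the analytic functions defining the cell; and $\zeta(z,u,\cdot)$ is a quadratic polynomial in $t$, so (3) holds. For (1): at $t=0$ we have $\zeta(z,u,0)=\frac{\mu(z)+M(z)}{2}+\frac{M(z)-\mu(z)}{2}\,u$, which runs over $[\mu(z),M(z)]=(\overline E)_z$ as $u$ runs over $I$, this segment being $\overline{E_z}$ when $z\in\pi(E)$. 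For (2): fix $z\in\pi(E)$, $t\in[-1,1]\setminus\{0\}$ and $u\in J$; then $u(1-t^2)\in(-1,1)$, hence $\zeta(z,u,t)\in[\mu(z),M(z)]$, in fact $\zeta(z,u,t)\in(\mu(z),M(z))=(f_1(z),f_2(z))=E_z$ in the prism case, while in the graph case $\mu(z)=M(z)=f(z)$ forces $\zeta(z,u,t)=f(z)\in E_z$; thus $\zeta(\{z\}\times J\times\{t\})\subset E_z$. All three conditions are then met, and apart from the local connectedness statement the verification is routine.
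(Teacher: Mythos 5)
Your proof is correct and follows essentially the same route as the paper: the graph/prism dichotomy, the min/max functions supplied by the preceding lemma, and the formula affine in $u$ with the factor $(1-t^2)$; your single global expression with $\mu,M$ defined on all of $\overline{\pi(E)}$ is exactly the union $\xi'\cup\xi''$ that the paper glues from the piece over $\pi(E)$ (where $\mu=f_1$, $M=f_2$, resp. $\mu=M=f$) and the piece over $N=\overline{\pi(E)}\setminus\pi(E)$ (where $\mu=\kappa$, $M=\lambda$). The one place where you go beyond the paper is in flagging, and sketching an argument for, the fact that $(\overline{E})_z$ is an interval for $z\in N$ --- a step the paper uses silently when asserting that $\xi''(\{z\}\times I\times\{0\})=(\overline{E})_z$.
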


\begin{proof}
We consider two cases:\\
\noindent(i) $E=f$, where $f\colon \pi(E)\to{\Rz}$ is a continuous analytic bounded function over the analytic cell $\pi(E)\subset{\Rz}^k$. Set $\xi'(z,u,t)=f(z)$ for $(z,u,t)\in \pi(E)\times I\times[-1,1]$. Let $N:=\overline{\pi(E)}\setminus\pi(E)$. Note that $\dim N<\dim\pi(E)$ (see \cite{C}). Consider then the intersection of the closure of the graph with $N\times{\Rz}$:
$$F:=(N\times{\Rz})\cap \overline{f}.$$
It is a bounded subanalytic set in ${\Rz}^k\times{\Rz}$ lying all over $N$. Set now 
$$
\kappa\colon N\ni z\mapsto \min F_z\in {\Rz}\quad \hbox{\it and}\quad \lambda\colon N\ni z\mapsto\max F_z\in{\Rz}.
$$
By the previous lemma both these functions are subanalytic and so is 
$$
\xi''(z,u,t)=(1-t^2)\frac{\lambda(z)-\kappa(z)}{2}u+\frac{\lambda(z)+\kappa(z)}{2} 
$$
for $(z,u,t)\in N\times I\times [-1,1]$. The function $\zeta:=\xi'\cup\xi''$ is the sought one.

\noindent(ii) $E=(f_1,f_2)$ is an analytic prism given by two bounded subanalytic and analytic functions over $\pi(E)$ and such that $f_1<f_2$. Then we set 
$$
\xi'(z,u,t)=(1-t^2)\frac{f_2(z)-f_1(z)}{2}u+\frac{f_2(z)+f_1(z)}{2}
$$
for $(z,u,t)\in\pi(E)\times I\times [-1,1]$. Now let 
$$
F:=(N\times{\Rz})\cap \overline{E},
$$
where $N$ is as earlier. Both are relatively compact subanalytic subsets. Let $\kappa,\lambda$ be as earlier. Thus the function
$$
\xi''(z,u,t)=(1-t^2)\frac{\lambda(z)-\kappa(z)}{2}u+\frac{\lambda(z)+\kappa(z)}{2}
$$
for $(z,u,t)\in N\times I\times[-1,1]$ is subanalytic. Then $\zeta:=\xi'\cup\xi''$ is the sought function.
\end{proof}

Now we are ready to prove theorem \ref{csl}:

\begin{proof} {\sl (Theorem \ref{csl}.)} 
We proceed by induction on $n$:\\
(1) First suppose that $n=1$. Then by theorem \ref{acd} 
we may assume that $E$ is an analytic cell. The previous lemma gives the assertion for analytic cells (with $s_z=1$). The theorem for $n=1$ follows now easily.


\smallskip
\noindent (2) Assume $n>1$. As earlier, we may suppose $E\subset{\Rz}_z^k\times{\Rz}^{n-1}_{w'}\times{\Rz}_{w_n}$ is an analytic cell over another analytic cell $E'\subset{\Rz}^k\times{\Rz}^{n-1}$. 
\\
By the induction hypothesis, there are $s$ (cf. remark \ref{obs}) functions $\varphi_j^z\colon {I}^{n-1}\times{[-1,1]}\to {\Rz}^{n-1}$ satisfying our theorem for the set $E'$. 

On the other hand,  whenever $z\in\pi(E)$ is fixed, we have $E_z=p(E\cap (E'_z\times{\Rz}_{w_n}))$, where $p(z,w)=w$ is the natural projection. The set $E_z$ is thus an analytic cell. Thence, if $\zeta(z,\cdot,\cdot,\cdot)$ (with $z\in\pi(E)$) is the function constructed for $E_z$ over $\overline{E'_z}$ as in lemma \ref{lemat}, using the function(s) defining the cell $E$ over $E'$, we have in particular
$$\zeta(\{(z,w')\}\times I\times \{0\})=(\overline{E_z})_{w'}$$
for $z\in\pi(E)$ and $w'\in\overline{E'_z}$. From the construction of $\zeta$ it is easy to see that this function is in fact subanalytic with respect to the variables $(z,w',u,t)$, where $z\in\pi(E)$, $w'\in\overline{E'_z}$ (i.e. in fact $(z,w')\in \overline{E'}\cap (\pi(E)\times {\Rz}_{w'}^{n-1}$), $(u,t)\in I\times[-1,1]$.


If we put now for $(z,y',y_n,t)\in\pi(E)\times I^{n-1}\times I\times [-1,1]$,
$$\psi_j(z,y',y_n,t):=(\varphi_j^z(y',t), \zeta(z,\varphi_j^z(y',t),y_n,t))\in{\Rz}^n,\quad j=1,\dots, s,$$
we obtain a subanalytic function, which is analytic with respect to $(y',y_n,t)\in J^{n-1}\times J\times(-1,1)$, since $\zeta_z$ is analytic in $E'\times J\times (-1,1)$. The theorem follows easily.
%
%
\end{proof}

\section{UPC condition with parameter}

Throughout this section $E\subset{\Rz}_z^k\times{\Rz}_w^n$ is an {\sl open} relatively compact subanalytic set. We denote  $E_z:=\{w\in{\Rz}^n\mid (z,w)\in E\}$ the section of $E$ at $z\in\pi(E)$, where $\pi(z,w)=z$ is the natural projection. 

The aim of this section is to prove the following theorem:

\begin{thm}\label{UPC}
In the introduced setting there exist constants $m,d>0$, $d\in{\Nt}$, and a subanalytic function $M\colon \pi(E)\to(0,+\infty)$ such that for each point $z\in \pi(E)$ $E_z\in UPC(M(z),m,d)$.
\end{thm}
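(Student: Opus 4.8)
The strategy is to follow the Paw\l ucki--Ple\'sniak argument for Corollary 6.6 of \cite{PP}, but keeping track of the dependence on the parameter $z$ throughout. The key input is Theorem \ref{csl}: for the bounded open subanalytic set $E$ we obtain finitely many subanalytic maps $\varphi_j^z\colon I^n\times[-1,1]\to{\Rz}^n$, analytic on $J^n\times(-1,1)$, with $\varphi_j^z(I^n\times\{t\})\subset E_z$ for $t\ne 0$ and $\bigcup_{j=1}^s\varphi_j^z(I^n\times\{0\})=\overline{E_z}$, and moreover the global map $(z,y,t)\mapsto\varphi_j^z(y,t)$ is subanalytic. Fix $x\in\overline{E_z}$. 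Then $x=\varphi_j^z(y,0)$ for some $j$ and some $y\in I^n$; the curve $t\mapsto\varphi_j^z(y,t)$ (restricted to $t\in[0,1]$, say) joins $x$ to points of $E_z$. The plan is to replace this curve by a polynomial curve of controlled degree using Taylor/Hermite interpolation, and to estimate from below the distance of the resulting polynomial curve to $\partial E_z$.

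First I would make the polynomial replacement. Since $\varphi_j^z(y,\cdot)$ is analytic on $(-1,1)$ and subanalytic on $[-1,1]$, its modulus of continuity is controlled by a \L ojasiewicz-type estimate; the classical Paw\l ucki--Ple\'sniak device (their Proposition 6.3 and the proof of Theorem 6.4) produces, from a curve of this type, a polynomial map $h$ of degree $\le d$ with $d$ depending only on the ``complexity'' of the family $\{\varphi_j^z\}$ — and here, crucially, that complexity is bounded uniformly in $z$ because the maps come from a \emph{single} subanalytic family (Theorem \ref{csl}(iii)) and $s_z\le s$ (Theorem \ref{csl}(ii), Remark \ref{obs}). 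This is why $d$, and similarly the exponent $m$, can be taken independent of $z$. The bound $\mathrm{dist}(h(t),{\Rz}^n\setminus E_z)\ge c(z)t^m$ then follows by combining the curve-to-polynomial error estimate with a lower bound for $\mathrm{dist}(\varphi_j^z(y,t),{\Rz}^n\setminus E_z)$.

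The latter lower bound is where the parameter genuinely enters, and it is the main technical point. One applies the \L ojasiewicz inequality with parameter (Theorem 2.4, our derived version) to a suitable subanalytic function measuring how deep inside $E_z$ the point $\varphi_j^z(y,t)$ sits: roughly, take $\Omega=\pi(E)\times I^n\times[-1,1]$ and a subanalytic function whose zero set corresponds to $t=0$ (where the curve hits the boundary), so that $\mathrm{dist}(\varphi_j^z(y,t),\partial E_z)\ge C(z)\,|t|^{m}$ with $C\colon\pi(E)\to(0,+\infty)$ subanalytic and $m$ a uniform exponent. Concretely I would set $A$ equal to the graph of $(z,y,t)\mapsto\varphi_j^z(y,t)$, intersected appropriately with the boundary $\partial E$ (a subanalytic set), and read off the desired estimate; the subanalyticity of the resulting constant function is exactly what Theorem \ref{Lojparam} guarantees. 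Patching over the finitely many indices $j=1,\dots,s$ by taking $M(z)$ to be a (subanalytic) minimum of the finitely many constants obtained — and the worst of the two branches $t\in[0,1]$ vs. $t\in[-1,0]$ — yields a single subanalytic $M\colon\pi(E)\to(0,+\infty)$, a single $m$, and a single $d$, which is the assertion.

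The main obstacle I anticipate is purely bookkeeping of uniformity: one must check that every constant arising in the Paw\l ucki--Ple\'sniak polynomial-approximation machinery (the degree $d$, the exponent $m$, and the various auxiliary \L ojasiewicz exponents for $\varphi_j^z(y,\cdot)$) can be extracted from the \emph{ambient} subanalytic family over $\pi(E)\times I^n\times[-1,1]$ rather than from each slice separately — this is precisely what the ``with parameter'' versions of regular separation and of Theorem \ref{csl} are designed to deliver, so no new idea beyond careful application should be needed. A minor additional point is to ensure that after polynomial replacement the curve still satisfies $h(0)=x$ and $h((0,1])\subset E_z$; this is handled, as in \cite{PP}, by Hermite-interpolating so as to fix the value at $0$ and by shrinking the parameter interval, at the cost of adjusting $M(z)$ by a constant factor — which stays subanalytic.
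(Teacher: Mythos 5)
Your proposal follows essentially the same route as the paper: it reduces to the parametrized curve-selection result (Theorem \ref{csl}), applies the \L ojasiewicz inequality with parameter to the subanalytic distance function $(z,y,t)\mapsto\mathrm{dist}(\varphi_j^z(y,t),\partial E_z)$ (whose zero set lies in $\{t=0\}$) to get $\mathrm{dist}(\varphi_j^z(y,t),\partial E_z)\geq C(z)t^m$ with uniform $m$ and subanalytic $C$, then replaces $\varphi_j^z(y,\cdot)$ by its Taylor polynomial of uniform degree $d$ and absorbs the flat remainder after a subanalytic rescaling $t\mapsto\delta_z t$, finally taking minima over $j$. This matches the paper's proof of its Theorem 3.3, so the plan is correct as stated.
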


\begin{ex}
In general $M$ cannot be chosen independently of $z\in\pi(E)$. To see this consider the following set:
$$E:=\{(x,y,z)\in (0,1)^3\mid x^2<z^2 y^3\}$$
with $\pi(x,y,z)=z$. Taking for instance $h_0^z(t)=(t/2,zt^2/4)$ (at $0\in\overline{E_z}$) one checks that $E_z\in UPC(z/16, 2,2)$. 
\end{ex}

To prove theorem \ref{UPC} we shall use the ideas of \cite{PP}. Therefore we begin with a counterpart of theorem 6.4 of \cite{PP}, which implies theorem \ref{UPC}:

\begin{thm}
In the introduced setting, there exist constants $d,m>0$, with $d\in{\Nt}$, and a function $M\colon\pi(E)\to(0,+\infty)$ (which may be required to be subanalytic) such that for each $z\in\pi(E)$ there is a mapping $h_z\colon \overline{E_z}\times {\Rz}\to{\Rz}^n$ satisfying the following conditions:\begin{enumerate}

\item[(i)] $h_z(w,t)=\sum_{j=1}^d a^z_j(w)t^j$;

\item[(ii)] $h_z(w,0)=w$, for all $w\in\overline{E_z}$, and $h_z(\overline{E_z}\times(0,1])\subset E_z$;

\item[(iii)] $\mathrm{dist}(h_z(w,t), \partial E_z)\geq M(z)t^m$ for all $(w,t)\in \overline{E_z}\times [0,1]$.
\end{enumerate}
We may require also $h(z,w,t)=h_z(w,t)$ and the coefficients $a^z_j$ be subanalytic.
\end{thm}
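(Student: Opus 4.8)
The strategy mirrors \cite{PP}, theorem 6.4, but carried out uniformly in the parameter $z$, using theorem \ref{csl} as the ``parametrized curve selecting'' input. First I would fix $z\in\pi(E)$ and, using the maps $\varphi_1^z,\dots,\varphi_s^z\colon I^n\times[-1,1]\to{\Rz}^n$ from theorem \ref{csl}, note that for each $w\in\overline{E_z}$ there is at least one index $j=j(w)$ and a point $y=y(w)\in I^n$ with $\varphi_j^z(y,0)=w$ (by condition (b) of theorem \ref{csl}(i)), while $\varphi_j^z(I^n\times\{t\})\subset E_z$ for $t\neq 0$. The natural candidate for a cuspidal curve emanating from $w$ is $t\mapsto \varphi_{j(w)}^z(y(w),t)$, which lands in $E_z$ for $t\in(0,1]$ and hits $w$ at $t=0$; the trouble is that it is merely analytic near the origin, not polynomial, and one has no control yet on its distance to $\partial E_z$.

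Next I would convert the analytic curve into a polynomial one and extract the required distance estimate. For the distance estimate, apply theorem \ref{Lojparam} (regular separation with parameter) to $A:=(\pi(E)\times{\Rz}^n)\setminus E$ (or rather a relatively compact subanalytic representative of $\overline{({\Rz}^k\times{\Rz}^n)\setminus E}$ intersected with a large cube) and $B:=\overline{E}$: this yields a subanalytic function $C(z)>0$ and an exponent $r>0$ such that $\mathrm{dist}(u,\partial E_z)\geq C(z)\,\mathrm{dist}(u,\overline{E_z})^{\,\text{something}}$ is not quite what is wanted — rather, one wants a lower bound for $\mathrm{dist}(\varphi_j^z(y,t),\partial E_z)$ in terms of a power of $t$. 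The cleaner route is: the function $(z,y,t)\mapsto \mathrm{dist}(\varphi_j^z(y,t),\partial E_z)$ is subanalytic and positive for $t\in(0,1]$, vanishing at $t=0$; by the \L ojasiewicz inequality with parameter (the second theorem of Section 2, applied with the subanalytic function $(z,y,t)\mapsto \mathrm{dist}(\varphi_j^z(y,t),\partial E_z)$, or directly by Theorem \ref{Lojparam}) there are an exponent $m\in{\Nt}$, independent of $(z,y,j)$, and a subanalytic $\widetilde M(z)>0$ with
$$\mathrm{dist}\bigl(\varphi_j^z(y,t),\partial E_z\bigr)\geq \widetilde M(z)\,|t|^{m}\quad\text{for }(y,t)\in I^n\times[-1,1].$$

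Then I would Taylor-expand. Since each $\varphi_j^z(y,\cdot)$ is analytic on $(-1,1)$ with a uniform (subanalytic-in-$(z,y)$) bound on its derivatives on a slightly smaller cube, I replace $\varphi_j^z(y,t)$ by its Taylor polynomial $P_j^z(y,t)=\sum_{i=0}^{d}\tfrac{1}{i!}\partial_t^i\varphi_j^z(y,0)\,t^i$ of a degree $d$ chosen so that the remainder satisfies $\|\varphi_j^z(y,t)-P_j^z(y,t)\|\leq K(z)\,|t|^{d+1}$ with $d+1>m$; this forces $\mathrm{dist}(P_j^z(y,t),\partial E_z)\geq \tfrac12\widetilde M(z)t^m$ for $t$ in a subinterval $[0,t_0(z)]$, and after rescaling $t\mapsto t_0(z)t$ (absorbing the scaling into new subanalytic coefficients and a new subanalytic $M(z)$) one gets the estimate on all of $[0,1]$. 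Note $P_j^z(y,0)=\varphi_j^z(y,0)$, so composing with the (subanalytic) selection $w\mapsto(j(w),y(w))$ — which can be made subanalytic by a cell-decomposition / definable-choice argument on the subanalytic set $\{(z,w,j,y)\mid \varphi_j^z(y,0)=w\}$ — defines $h_z(w,t):=P^z_{j(w)}(y(w),t)=\sum_{i=1}^{d}a_i^z(w)t^i+w$ with subanalytic coefficients $a_i^z(w)$, and $h(z,w,t):=h_z(w,t)$ subanalytic jointly. This gives (i)–(iii).

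\textbf{Main obstacle.} The delicate point is the \emph{uniformity of the Taylor remainder bound in $z$} together with making the \emph{selection} $w\mapsto(j(w),y(w))$ subanalytic: a priori $\partial_t^i\varphi_j^z(y,0)$ is only subanalytic in $(z,y)$ on $\pi(E)\times I^n$ (the boundary behaviour as $z$ approaches $\overline{\pi(E)}\setminus\pi(E)$ or $y\to\partial I^n$ may be bad), so one must either restrict to the region where theorem \ref{csl}(iii) guarantees analyticity and handle the boundary by a separate \L ojasiewicz estimate, or invoke a uniform version of the analytic-remainder estimate for subanalytic families. I expect this — squeezing a single polynomial degree $d$ and a single exponent $m$ that work for all $z$, while letting only the \emph{constant} $M(z)$ degrade — to be where the bulk of the technical work lies; everything else (the definable choice of $(j(w),y(w))$, absorbing scalings, passing from $\partial E_z$ to ${\Rz}^n\setminus E_z$ via the Remark after the Definition) is routine.
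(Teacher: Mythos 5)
Your proposal is correct and follows essentially the same route as the paper: apply the parametrized \L ojasiewicz inequality to the subanalytic function $(z,y,t)\mapsto\mathrm{dist}(\varphi_j^z(y,t),\partial E_z)$ (whose zero set lies in $\{t=0\}$, yielding the lower bound $C(z)t^m$), replace $\varphi_j^z(y,\cdot)$ by its degree-$d$ Taylor polynomial with the remainder written as $t^{d+1}Q_j^z(y,t)$ for a bounded subanalytic $Q_j^z$, absorb the remainder on $[0,\delta_z]$ and rescale, then make a definable selection $w\mapsto(j(w),y(w))$. The paper realizes that last selection concretely via a disjoint partition $G_j^z$ of $\overline{E_z}$ and a lexicographic supremum over the fibre $(f_j^z)^{-1}(w)$, and handles your ``main obstacle'' (uniformity of the remainder) exactly by the flatness-plus-subanalyticity factorization you describe, so no genuinely different ingredient is involved.
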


\begin{proof} 
For $z\in\pi(E)$ let $\varphi_j^z$, $j=1,\dots, s_z$ be the mappings from theorem \ref{csl}. Adding, if necessary, constant functions, we may assume that $s_z=s$ for all $z\in\pi(E)$. Then we set
$$\psi_j\colon \pi(E)\times I^n\times [0,1]\ni (z,y,t)\mapsto \mathrm{dist}(\varphi_j^z(y,t),\partial E_z)\in\Rz,\quad j=1,\dots, s.$$
All these functions are subanalytic (see e.g. \cite{DS}). Then by theorem \ref{Lojparam} we obtain the inequalities $||(\psi_j)_z(y,t)||\geq C_j(z)\mathrm{dist}((y,t),(\psi_j)_z^{-1}(0))^{r_j}$ with some $r_j, C_j(z)>0$ and for all $(y,t)\in I^n\times [0,1]$, with $z\in \pi(E)$. 

Note that by theorem \ref{csl}(i)(b) $(\psi_j)_z(y,t)=0$ implies $t=0$. Therefore for each $z\in\pi(E)$
$$
||(\psi_j)_z(y,t)||\geq C(z)t^{m},\quad\hbox{\it for}\ (y,t)\in I^n\times[0,1],\leqno{(\ast)}
$$
where now $m:=\max_j r_j$, $C(z):=\min_j C_j(z)$. 

Now fix an integer $d\geq r$ and let $$T_j^z(y,t):=\sum_{\nu=1}^d \frac{1}{\nu!}\frac{\partial^\nu\varphi_j^z}{\partial t^\nu}(y,0)t^\nu,\quad t\in{\Rz}, y\in I^n,$$
be the Taylor polynomial at zero of degree $d$ of the mapping $\varphi_j^z(y,\cdot)$. Since $\varphi_j^z-T_j^z$ is subanalytic in $I^n\times[-1,1]$, analytic in $J^n\times(-1,1)$ and $(d+1)$-flat at zero, then there exists a subanalytic {\sl bounded} mapping $Q_j^z$ such that 
$$\varphi_j^z(y,t)=T_j^z(y,t)+t^{d+1}Q_j^z(y,t),\quad\hbox{\it for}\ (y,t)\in{I}^n\times{[-1,1]}.\leqno{(\ast\ast)}$$
(Moreover, the mappings $Q_j(z,y,t)=Q_j^z(y,t)$ are subanalytic on $\pi(E)\times{I}^n\times{[-1,1]}$.)

Whenever $z\in\pi(E)$ is fixed, one can choose $\delta_z\in(0,1]$ so that for each $j$, $||tQ_j^z(y,t)||\leq C(z)/2$ as $y\in I^n$ and $t\in[0,\delta_z]$. Then by $(\ast)$ and $(\ast\ast)$, for all $j=1,\dots, s$,{\small
$$\mathrm{dist}(T_j^z(y,t),\partial E_z)\geq C(z)t^m-\frac{C(z)}{2}t^d\geq\frac{C(z)}{2} t^m,\quad \hbox{\it for}\ (y,t)\in I^n\times[0,\delta_z].$$}
Thus for $M(z):=C(z)\delta_z^m/2$ we obtain 
$$\mathrm{dist}(T_j^z(y,\delta_zt),\partial E_z)\geq M(z)t^m,\>\ \hbox{\it for}\ (y,t)\in I^n\times[0,1].$$

To finish the proof it remains to observe that $$\bigcup_{j=1}^s T_j^z(I^n\times\{0\})=\bigcup_{j=1}^s\varphi_j^z(I^n\times\{0\})=\overline{E_z}.$$
The construction of $h_z$ consists in taking first a finite disjoint partition of $\overline{E_z}$ by means of the functions $\varphi^z_j$. For if we put $f_j^z(y):=\varphi_j^z(y,0)$ and $F^z_j:=f^z_j(I^n)$, we obtain $\overline{E_z}=\bigcup_j G_j^z$, the union being disjoint, where $G^z_1:=F^z_1$ and $G^z_j=F^z_j\setminus(E^z_1\cup\ldots\cup E^z_j)$ for $j>1$. 

Then each point $w\in\overline{E_z}$ belongs to exactly one set $G^z_j$. We put $h_z(w,t):=\sup T^z_j(y,\delta_z t)$, where the supremum is computed relatively to the lexicographic order over all $y\in (f_j^z)^{-1}(w)$ (which forms a compact subanalytic subset of $I^n$). It is easy to see that $\pi(E)\ni z\mapsto\delta_z$ may always be required to be subanalytic too, which ends the proof.
\end{proof}

Clearly, theorem \ref{UPC} is a direct consequence of the one above. 

\smallskip
Following the proof of \cite{PP} theorem 3.1 one can now esily obtain the following Markov's inequality with parameter for subanalytic sets. Let $||P||_K:=\sup\{|P(w)|\mid w\in K\}$ for any bounded $K\subset{\Rz}^n$ and a continuous function $P$.

\begin{thm}
In the introduced setting there exists a constant $r>0$ such that for each polynomial $P\colon{\Rz}^n\to {\Rz}$ of degree $\leq q$ and for each $\alpha\in\mathbb{Z}^n_+$, one has 
$$||D^\alpha P||_{E_z}\leq C(z,E,\alpha)q^{r|\alpha|} ||P||_{E_z},\quad z\in\pi(E),$$
where $C(z,E,\alpha)>0$ is a constant depending only on the set $E$, the point $z\in\pi(E)$ and the multi-index $\alpha$. 
\end{thm}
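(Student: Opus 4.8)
The plan is to follow the classical argument of Paw\l ucki--Ple\'sniak (\cite{PP}, Theorem 3.1), keeping track of how each constant depends on the parameter $z$. First I would fix $z\in\pi(E)$ and work inside the single section $E_z$, which by the previous theorem satisfies $E_z\in UPC(M(z),m,d)$ with $m,d$ independent of $z$ and $M$ subanalytic. The key classical input is: for a compact fat set $K$ admitting a polynomial cusp $h_w$ of degree $d$ with $\mathrm{dist}(h_w(t),\partial K)\geq M t^m$, one estimates $|D^\alpha P(w)|$ by writing $P\circ h_w$ as a one-variable polynomial of degree $\leq qd$, applying the classical Markov inequality on $[0,1]$ to control its derivatives at $0$, and then expressing $D^\alpha P(w)$ in terms of these derivatives via the chain rule (Fa\`a di Bruno) together with Cauchy estimates on a ball $B(h_w(t),Mt^m)\subset E_z$ on which $|P|\leq \|P\|_{E_z}$. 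Optimising over $t\in(0,1]$ produces a bound of the shape $\|D^\alpha P\|_{E_z}\leq C(z,E,\alpha)\, q^{r|\alpha|}\|P\|_{E_z}$, where $r$ depends only on $m$ and $d$ (hence only on $E$), and $C(z,E,\alpha)$ collects the powers of $M(z)$, the Markov constant and the multinomial/combinatorial factors from the chain rule, all of which depend only on $E$, on $z$ through $M(z)$, and on $\alpha$.

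Concretely, the steps in order: (1) Recall the one-dimensional Markov inequality $\|g'\|_{[-1,1]}\leq N^2\|g\|_{[-1,1]}$ for $\deg g\leq N$, iterated to $|g^{(\nu)}(0)|\leq c_\nu N^{2\nu}\|g\|_{[0,1]}$. (2) For $w\in\overline{E_z}$ take the cusp $h_w=h_z(w,\cdot)$ and set $g(t):=P(h_w(t))$, a polynomial of degree $\leq qd$; then $g^{(\nu)}(0)$ is a universal polynomial in the $D^\beta P(w)$, $|\beta|\leq\nu$, and in the coefficients $a^z_j(w)$ of $h_w$. (3) Invert this triangular system to express $D^\alpha P(w)$ through the $g^{(\nu)}(0)$, $\nu\leq|\alpha|$, and through negative powers of the leading data of $h_w$; this is where one needs the non-degeneracy of the cusp, guaranteed by condition (iii), and where a factor involving $M(z)^{-|\alpha|}$ enters. (4) Alternatively, and more cleanly for controlling derivatives, use the Cauchy integral estimate: for each $t\in(0,1]$ the ball $B\bigl(h_z(w,t),M(z)t^m\bigr)\subset E_z$, so $|D^\alpha P(h_z(w,t))|\leq \alpha!\,(M(z)t^m)^{-|\alpha|}\|P\|_{E_z}$; combine this with the Taylor expansion of $P$ along the cusp and the Markov bound on $g$ to transport the estimate from $h_z(w,t)$ back to $w=h_z(w,0)$, choosing $t$ of order $q^{-2}$ to balance the two contributions. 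This yields the exponent $r=2m+2$ (or similar, depending only on $m$), and the constant $C(z,E,\alpha)$ absorbing $\alpha!$, the fixed Markov constants, $M(z)^{-|\alpha|}$ and the sup of the coefficients $a^z_j$.

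The subanalyticity and parameter-uniformity are then essentially free: since $m$ and $d$ come from the previous theorem independently of $z$, so does $r$; and $C(z,E,\alpha)$ is an explicit elementary expression in $M(z)$ and $\sup_{w}|a^z_j(w)|$, the latter being bounded uniformly in $z$ by relative compactness of $E$ (the $a^z_j$ being subanalytic on $\pi(E)\times\overline{E_z}$). I would remark that the dependence on $\alpha$ cannot in general be removed, exactly as in \cite{PP}, and that the dependence on $z$ through $M(z)$ is unavoidable by the same example ($E=\{x^2<z^2y^3\}$) that shows $M$ cannot be chosen independently of $z$.

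I expect the main obstacle to be bookkeeping rather than a conceptual difficulty: one must verify carefully that, in the chain-rule inversion in step (3)--(4), the only source of $z$-dependence beyond the fixed combinatorial constants is through $M(z)$ and through $\sup_w\|a^z_j(w)\|$, and that no hidden $z$-dependence creeps into the exponent $r$ or into the power of $q$. Isolating a clean single inequality $\|D^\alpha P\|_{E_z}\leq C(z,E,\alpha)q^{r|\alpha|}\|P\|_{E_z}$ with the stated shape of $C$ requires being slightly more careful than in the classical one-parameter case, but no new tool beyond the UPC property with parameter just proved and the elementary Markov and Cauchy estimates is needed.
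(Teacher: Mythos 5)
Your proposal is correct and follows essentially the same route as the paper, which in fact gives no written proof here but simply points to the argument of \cite{PP}, Theorem 3.1, applied section by section using the UPC theorem with parameter just established. Your tracking of the $z$-dependence (exponent $r$ determined by the $z$-independent $m,d$; the constant absorbing $M(z)^{-|\alpha|}$ and the bounds on the coefficients $a^z_j$) is exactly the bookkeeping the authors have in mind.
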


\section{Acknowledgements}
The second-named author was partially supported by  Polish Ministry of Science
and Higher Education grant 1095/MOB/2013/0.

\bigskip
\noindent{\small Jagiellonian University\hfill Cracow University of Economics\\
Institute of Mathematics\hfill Department of Mathematics\\
\L ojasiewicza 6\hfill Rakowiecka 27\\
30-348 Krak\'ow, Poland\hfill 31-510 Krak\'ow, Poland\\
e-mail addresses: {\tt denkowsk@im.uj.edu.pl}\hfill{\tt anna.denkowska@uek.krakow.pl}}

\end{document}